\numberwithin{equation}{section}
\theoremstyle{plain}
\newtheorem{theor10}{Theorem}
\newenvironment{theor1}
  {\pushQED{\qed}\begin{theor10}}
  {\popQED\end{theor10}}
\newtheorem{prop10}[theor10]{Proposition}
\newenvironment{prop1}
  {\pushQED{\qed}\begin{prop10}}
  {\popQED\end{prop10}}
\newtheorem{cor10}[theor10]{Corollary}
\newtheorem{lem10}[theor10]{Lemma}
\newtheorem{theor0}{Theorem}[section]
\newtheorem{lem0}[theor0]{Lemma}
\newenvironment{lem}
  {\pushQED{\qed}\begin{lem0}}
  {\popQED\end{lem0}}
\newtheorem{prop0}[theor0]{Proposition}
\newtheorem{cor0}[theor0]{Corollary}
\theoremstyle{definition}
\newtheorem{rems0}[theor0]{Remarks}
\newtheorem{rem0}[theor0]{Remark}
\theoremstyle{plain}
\newtheorem{as0}[theor0]{Assumption}
\newtheorem*{asn0*}{\assumptionnumber}
  \providecommand{\assumptionnumber}{}
\newenvironment{asn0}[2]
   {\renewcommand{\assumptionnumber}{Assumption \!#1 {\normalfont--- #2}}
    \begin{asn0*}
    \protected@edef\@currentlabel{{\normalfont#1}}}
   {\end{asn0*}}
\newenvironment{asn01}[1]
   {\renewcommand{\assumptionnumber}{Assumption \!#1}
    \begin{asn0*}
    \protected@edef\@currentlabel{{\normalfont#1}}}
   {\end{asn0*}}
\newcommand{\N}{\mathbb N}
\newcommand{\Pc}{\mathcal{P}}
\newcommand{\R}{\mathbb R}
\newcommand{\Z}{\mathbb Z}
\newcommand{\Aa}{A}
\newcommand{\E}{\mathbb{E}}
\newcommand{\step}[1]{\noindent \textit{Step} #1.}
\newcommand{\pr}[1]{\mathbb{P}\left[ #1 \right]}
\newcommand{\expec}[1]{\mathbb{E}\left[ #1 \right]}
\newcommand{\expech}[1]{\mathbb{E}_h\left[ #1 \right]}
\newcommand{\expecm}[1]{\mathbb{E}\big[ #1 \big]}
\title[Gevrey regularity of the diffusion coefficient]{A short proof of Gevrey regularity for homogenized coefficients of the Poisson point process}
\author[M. Duerinckx]{Mitia Duerinckx}
\address[Mitia Duerinckx]{Universit\'e Paris-Saclay, CNRS, Laboratoire de Math\'ematiques d'Orsay, 91405~Orsay, France \& Universit\'e Libre de Bruxelles, D\'epartement de Math\'ematique, 1050~Brussels, Belgium}
\email{mduerinc@ulb.ac.be}
\author[A. Gloria]{Antoine Gloria}
\address[Antoine Gloria]{Sorbonne Universit\'e, CNRS, Universit\'e de Paris, Laboratoire Jacques-Louis Lions, 75005~Paris, France \& Institut Universitaire de France \& Universit\'e Libre de Bruxelles, D\'epartement de Math\'ematique, 1050~Brussels, Belgium}
\email{gloria@ljll.math.upmc.fr}
\begin{document}
\selectlanguage{english}

\maketitle

\begin{abstract}
In this short note we capitalize on and complete our previous results on the regularity of the homogenized coefficients for Bernoulli perturbations by addressing the case of the Poisson point process, for which the crucial uniform local finiteness assumption fails. 
In particular, we strengthen the qualitative regularity result first obtained in this setting by the first author to Gevrey regularity of order~2.
The new ingredient is a fine application of properties of Poisson point processes, in a form recently used by Giunti, Gu, Mourrat, and Nitzschner.

\bigskip\noindent
{\sc MSC-class:} 35R60, 60G55.

\noindent{\sc keywords:} homogenized coefficients, Poisson point process.
\end{abstract}

\setcounter{tocdepth}{2}

\section{Introduction and main result}

\subsection{Context}

This short note is concerned with the expansion of the homogenized coefficients under Bernoulli perturbations of Poisson point processes, and can be considered as an appendix to \cite{DG-16a}. Consider a locally finite stationary ergodic random point set $\Pc=\{x_n\}_n$ in $\R^d$ ($d\ge 1$), to which we associate the random (diffusion) coefficient field $A(\Pc)$ on $\R^d$
\begin{equation}\label{e.coeff}
A(\Pc)(x):=A_1(x) +(A_2(x)-A_1(x)) \mathds{1}_{\cup_n B(x_n)}(x),
\end{equation}
where $B(x_n)$ denotes the unit ball centered at point $x_n$, and $A_1$ and $A_2$ are ergodic stationary random uniformly elliptic symmetric coefficient fields (that is, the standard assumptions of stochastic homogenization). Symmetry is not essential in what follows, see e.g.~the discussion at the end of \cite[Section~1]{DG-16a}. Since $A_1$, $A_2$, and $\Pc$ are stationary and ergodic, the random coefficient field $A(\Pc)$ is also stationary and ergodic itself, and we can define the associated homogenized coefficient $\bar A(\Pc)$, a deterministic matrix given in direction $e \in \R^d$ by
\begin{equation}\label{e.coeff-hom}
\bar A (\Pc)e = \expec{A(\nabla \varphi+e)},
\end{equation}
where $\varphi$ is the so-called corrector, see~\eqref{e.corr} below for details, and where $\expec{\cdot}$ denotes the expectation in the underlying probability space.

For all $0\le p \le 1$, denote by $\Pc^{(p)}$ the random Bernoulli deletion of $\Pc$,
that is, $\Pc^{(p)}=\{x_n: b_n^{(p)}=1\}$ with $\{b_n^{(p)}\}_n$ a sequence of independent Bernoulli variables of law $(1-p) \delta_0+ p\delta_1$. This means that $\Pc^{(p)}$ is a decimated point process (with $\Pc^{(0)}=\varnothing$ and $\Pc^{(1)}=\Pc$).
With~$\Pc^{(p)}$, we associate $A^{(p)}:=A(\Pc^{(p)})$ and $\bar A^{(p)}:=\bar A(\Pc^{(p)})$ as in \eqref{e.coeff} and \eqref{e.coeff-hom}.
In these terms, we are interested in the regularity of the map $p \mapsto \bar A^{(p)}$.
Inspired by \cite{Anantharaman-LeBris-11,Anantharaman-LeBris-12},  we established in~\cite{DG-16a} its analyticity under the crucial assumption 
that $\Pc$ be uniformly locally finite, that is, if $\sup_{x\in \R^d} \sharp\{x_n \in B(x)\}<\infty$. 
This result, which does not rely on any mixing assumption of $\Pc$ itself (besides qualitative ergodicity), does not apply to the Poisson point process since the latter is not uniformly locally finite. 

The present note is concerned with the case of the Poisson point process. Denote by $\Pc_{\lambda}$  a Poisson point process with intensity $\lambda\ge 0$ (that is, $\expecm{ \sharp\{\Pc_\lambda \cap [0,1)^d\}}=\lambda$).
Then, the decimated process $\Pc_\lambda^{(p)}$ has the same law as $\Pc_{p\lambda}$, so that the regularity of $\lambda\mapsto\bar A_\lambda$ is equivalent to the regularity of $p\mapsto \bar A^{(p)}_\lambda$ for fixed $\lambda$.
As announced in \cite{DG-16a}, exploiting that $\Pc_\lambda$ has finite range of dependence, and assuming that $A_1$ and $A_2$ are constant, the first author proved the smoothness of $\lambda \mapsto \bar A_{\lambda}$ in his PhD thesis~\cite[Theorem 5.A.1]{D-thesis},
based on the quantitative homogenization estimates of~\cite{Gloria-Otto-10b} (in the spirit of~\cite{Mourrat-13} for the first-order expansion in the discrete setting).
The question of quantitative smoothness (such as Gevrey regularity or analyticity) of $\lambda \mapsto \bar A_\lambda$ was left open.

Motivated by applications to homogenization of particle systems~\cite{giunti2021quantitative},
Giunti, Gu, Mourrat, and Nitzschner recently addressed
a related  problem in a different setting, and proved the Gevrey regularity of $\lambda \mapsto \bar a_\lambda$ in  \cite{giunti2021smoothness} (a variant of $\lambda \mapsto \bar A_\lambda$). Their approach is based on Poisson calculus (cf.~\cite{LP-18}), which they use both to derive formulas and to prove estimates. In the introduction of \cite{giunti2021smoothness}, the authors point out that the strategy they use could be applied to prove the regularity of $\lambda \mapsto \bar A_\lambda$ in our setting.

\subsection{Main results}
The aim of this note is twofold. First, we show that \cite{giunti2021smoothness}, besides having the same layout as \cite{DG-16a}, is a direct implementation (in the setting of homogenization of particle systems and Poisson calculus) of the strategy based on the triad ``\emph{local approximation / cluster expansions / improved~$\ell^1-\ell^2$ estimates}'' that we introduced for general point processes in~\cite{DG-16a}.
Second, we single out the new ingredient of \cite{giunti2021smoothness} for Poisson processes wrt to \cite{DG-16a}  (see Lemma~\ref{lem:JC} below), and combine it with our general formulation of \cite{DG-16a} in order to prove the Gevrey regularity of the map $\lambda \mapsto \bar A_\lambda$
 with little effort.

\medskip

We start with the comparison of \cite{giunti2021smoothness} with our previous work~\cite{DG-16a}. Although the precise functional setting is different, the identities, the estimates, and the arguments leading to them have the same form.
The arguments in~\cite{giunti2021smoothness} are as follows:
\begin{enumerate}[$\bullet$]
\item The authors first introduce in~\cite[Section~3]{giunti2021smoothness} a sequence of local approximations of $\bar a_\lambda$ 
 on bounded domains. This is in line with the massive approximations used in~\cite{DG-16a} (making cluster expansions finite).
\item They view correctors as functions of sets of indices and introduce a difference calculus (see \cite[(2.9)--(2.11) and Proposition~5.1]{giunti2021smoothness}) that provides a natural way to write cluster expansions. This coincides with the point of view and the definitions of~\cite[Section~2.2]{DG-16a}.
\item They dedicate~\cite[Section~4]{giunti2021smoothness} to the proof of $C^{1,1}$-regularity to illustrate the general strategy, as we did in \cite[Section~3]{DG-16a} for the map $p \mapsto \bar A^{(p)}$.
\item They turn in~\cite[Section~5]{giunti2021smoothness} to the proof of their main result~\cite[Theorem~2.3]{giunti2021smoothness}, which they split into several parts:
\begin{enumerate}[---]
\item They first derive explicit formulas \cite[(5.9)--(5.13)]{giunti2021smoothness} for the terms of the cluster expansion and for the remainder. These are to be compared to \cite[Lemma~5.1]{DG-16a} (once reformulated using Poisson calculus).
\item Then they introduce and prove ``key estimates''   in~\cite[Proposition~5.4]{giunti2021smoothness}. Both the statement and the proof are to be compared to what is called ``improved $\ell^1-\ell^2$ estimates'' in the original~\cite[Proposition~4.6]{DG-16a}, the very core of \cite{DG-16a} (reformulated using Poisson calculus again).  
The only new ingredient with respect to the strategy of \cite{DG-16a} turns out to be an interesting property of the Poisson point process, which we single out here in Lemma~\ref{lem:JC} below.
\item Finally, they combine the explicit formulas for the cluster expansion and remainder together with the $\ell^1-\ell^2$ estimates in order to pass to the limit in the approximation parameter, cf.~\cite[Section~5.5]{giunti2021smoothness}. This string of arguments is similar to~\cite[Section~5]{DG-16a}.
They remark that a careful tracking of the constants in their proofs (which they omit) would reveal that $\lambda \mapsto \bar a_\lambda$ has Gevrey regularity of order 2.
\end{enumerate}
\end{enumerate}
Next, relying on our original results in~\cite{DG-16a} (without using Poisson calculus),
together with a few adaptations,
we shall establish the following version of \cite[Theorem~2.1]{DG-16a} for the Poisson point process.
\begin{theor1}\label{th:main}
The map $\lambda \mapsto \bar A_\lambda$ is Gevrey regular of order $2$ on $[0,\infty)$, and derivatives
are given by cluster formulas as in~\cite{DG-16a}.
\end{theor1}
Compared to our previous result \cite[Theorem~2.1]{DG-16a}, Theorem~\ref{th:main} treats the Poisson point process (relaxing the assumption that the point process be uniformly locally finite). This comes at a price:   whereas real-analyticity was established in \cite[Theorem~2.1]{DG-16a}, we only obtain Gevrey-regularity in Theorem~\ref{th:main}. Also, it is not clear to us to what extent this result is expected to hold for the thinning by Bernoulli deletion of other non-uniformly locally finite point processes than Poisson.

\section{Proof of the Gevrey regularity}

\subsection{Strategy of the proof}\label{sec:strategy}

Recall that $\Pc_{\lambda}^{(p)}$ and $\Pc_{p\lambda}$ have the same law for all $p\in[0, 1]$, hence ${\bar A}_\lambda^{(p)}={\bar A}_{p\lambda}$,
which entails that regularity of $\lambda\mapsto\bar A_\lambda$ on $[0,\infty)$ is equivalent to regularity of $p \mapsto \bar A_\lambda^{(p)}$ for any~$\lambda>0$.
In addition, replacing the underlying random field $A_1$ by the law of $A_\lambda^{(p_0)}$ turns $A_\lambda^{(p)}$ into the law of $A_\lambda^{(p+p_0)}$, hence we may restrict to proving the regularity of $p \mapsto \bar A_\lambda^{(p)}$ at $p=0$ for any~$\lambda>0$.
In what follows, 
we let $\lambda>0$ be arbitrary, yet fixed, and we skip the subscript $\lambda$ for simplicity.
We start with two approximations.
First, as in \cite{DG-16a}, we replace the corrector gradient $\nabla \varphi$,
that is the centered stationary gradient solution of the 
whole-space PDE
\begin{equation}\label{e.corr}
-\nabla \cdot A(\nabla \varphi+e)=0,
\end{equation}
by the gradient $\nabla \varphi_T$ of its massive approximation, that is
the corresponding solution of the 
whole-space PDE
\begin{equation}\label{e.corrT}
\frac1T \varphi_T-\nabla \cdot A(\nabla \varphi_T+e)=0.
\end{equation}
As opposed to \eqref{e.corr}, the latter equation~\eqref{e.corrT} is well-posed on a deterministic level (that is, well-posed for any uniformly elliptic coefficient field $A$), and the dependence of $\nabla \varphi_T(x)$ upon the values of $A$ restricted on $Q(y)=[y,y+1)^d$ is uniformly exponentially small in
$\frac{|x-y|}{\sqrt T}$.
Next, we replace the Poisson point process $\Pc$ by a sequence of uniformly locally finite point processes $\{\Pc_h\}_h$ defined as follows.
For $h>0$, we decompose $\R^d$ into the union of cubes $Q_h(z)=z+[0,h)^d$ with $z\in (h\Z)^d$.
On each cube $Q_h(z)$ we pick randomly a point $x_z$ (independently of the others), we attach an independent Bernoulli variable $b_z$ of parameter $\lambda h^d$, and finally set
$$
\Pc_h:=\{x_z \,:\, z \in (h\Z)^d,b_z=1\}.
$$
So defined, $\Pc_h$ is indeed uniformly locally finite and it has $h$-discrete stationarity and finite range of dependence. In addition, $\Pc_h$ converges in law to $\Pc$ as $h\downarrow 0$.
Using these two approximations, we introduce the following  proxy for the homogenized coefficients,
$$
\bar A_{T,h}^{(p)}e := \expech{  A(\Pc_h^{(p)})(\nabla \varphi_{T,h}^{(p)}+e)},
$$
with the short-hand notation $\expech{\cdot}:=\E[\fint_{Q_h(0)} \cdot]$ and $\varphi_{T,h}^{(p)}=\varphi_{T}(\Pc_h^{(p)})$.
By qualitative stochastic homogenization arguments (see e.g.~\cite[Theorem~1]{Gloria-12} for the convergence in $T$ and \cite[Step~1 in Section~5.2]{DG-16a} for the convergence in $h$), we have for all $p\in[0,1]$,
\begin{equation}\label{e.qual-conv}
\lim_{T\uparrow \infty,h \downarrow 0} \bar A_{T,h}^{(p)} = \bar A^{(p)}.
\end{equation}
By~\cite[Theorem~2.1]{DG-16a}, $p \mapsto \bar A_{T,h}^{(p)}$ is real-analytic close to zero (and actually on the whole interval~$[0,1]$), and there exists a sequence $\{\bar A_{T,h}^{j}\}_j$, given by explicit cluster formulas, cf.~Lemma~\ref{lem:cluster-form} below,
such that for all $p$ small enough we have
\begin{equation}\label{eq:expansion-ATh}
\bar A_{T,h}^{(p)}\,=\, \sum_{j=0}^\infty \frac{p^j}{j!} \bar A_{T,h}^{j}.
\end{equation}
As we shall see, Theorem~\ref{th:main} follows in the limit $T\uparrow \infty,h \downarrow 0$ provided we prove that there exists $C<\infty$  such that this sequence further satisfies for all $j$,
\begin{equation}\label{e.unif-bd}
\sup_{T\ge 1, h \le 1} |\bar A_{T,h}^{j}| \,\le\,j!^2C^j.
\end{equation}
The main ingredient to  \eqref{e.unif-bd} is Proposition~\ref{prop:main} below.
Before we state this result, let us recall some notation and results borrowed from~\cite{DG-16a}.

\subsection{Difference operators and inclusion-exclusion formula}
We start by considering correctors as functions of indices, and then recall the associated difference calculus and the inclusion-exclusion formula.
In what follows, we write $\Pc=\{x_n\}_n$ and set $J_n:=B(x_n)$. Note that inclusions $\{J_n\}_n$ could have different shapes and even be random as well provided they are uniformly bounded.

\medskip
\noindent
{\bf Correctors as functions of indices.}

For all (possibly infinite) subsets $E\subset\N$, we define $A^E:=A_1+C^E$, where $C^E:=(A_2-A_1)\mathds1_{J^E}$ and $J^E:=\bigcup_{n\in E}J_n$,
and we introduce the following variant of \eqref{e.corrT}:
\begin{equation}\label{eq:modif-corr}
\frac1T\varphi_{T}^E-\nabla\cdot A^E(\nabla\varphi_{T}^E+e)=0.
\end{equation}
Setting $E^{(p)}:=\{n\in\N\,:\,b_n^{(p)}=1\}$, we use the short-hand notation $C^{(p)}:=C^{E^{(p)}}$, $A^{(p)}:=A^{E^{(p)}}$, and $\varphi_T^{E^{(p)}}=\varphi_T^{(p)}$.

\medskip
\noindent
{\bf Difference operators.}

We introduce for all $n\in \N$ a difference operator $\delta^{\{n\}}$ acting generically on measurable functions of the point process, and in particular on approximate correctors as follows: for all $H\subset \N$,
\[\delta^{\{n\}}\varphi_T^H:=\varphi_T^{H\cup\{n\}}-\varphi_T^H.\]
This operator yields a natural measure of the sensitivity of the corrector $\varphi_T^H$ with respect to the perturbation of the medium at inclusion $J_n$. 
For all finite $F\subset\N$, we further introduce the higher-order difference operator $\delta^F=\prod_{n\in F}\delta^{\{n\}}$. More explicitly, this difference operator $\delta^F$ acts as follows on approximate correctors $\varphi_T^H$: for all $H\subset \N$,
\begin{equation}\label{eq:def-diff}
\delta^{F}\varphi_T^H:=\sum_{l=0}^{|F|}(-1)^{|F|-l}\sum_{G\subset F\atop |G|=l}\varphi_T^{G\cup H}=\sum_{G\subset F}(-1)^{|F\setminus G|}\varphi_T^{G\cup H},
\end{equation}
with the convention $\delta^\varnothing\varphi_T^H=(\varphi_T^H)^\varnothing:=\varphi_T^H$.
As in the physics literature, see~\cite{Torquato-02}, such operators are used
to formulate {\it cluster expansions}, which are viewed as formal proxies for Taylor expansions with respect to the Bernoulli perturbation: up to order $k$ in the parameter $p$, the cluster expansion for the perturbed corrector reads, for small $p\ge0$,
$$
\varphi_T^{(p)}\leadsto  \varphi_T+\sum_{n\in E^{(p)}}\delta^{\{n\}}\varphi_T+\frac1{2!}\sum_{n_1,n_2\in E^{(p)}\atop\text{distinct}}\delta^{\{n_1,n_2\}}\varphi_T+\ldots+\frac1{k!}\sum_{n_1,\ldots,n_k\in E^{(p)}\atop\text{distinct}}\delta^{\{n_1,\ldots,n_k\}}\varphi_T,
$$
which we rewrite in the more compact form
\begin{align}\label{eq:clusterphiT}
\varphi_T^{(p)}\leadsto \sum_{j=0}^k\sum_{F\subset E^{(p)}\atop|F|=j}\delta^F\varphi_T,
\end{align}
where $\sum_{|F|=j}$ denotes the sum over $j$-uplets of integers (when $j=0$, this sum reduces to the single term $F=\varnothing$).
Intuitively, this means that $\varphi_T^{(p)}$ is expected to be close to a series where the term of order $\ell$ involves a correction due to the $\ell$-particle interactions.

For convenience, we set $\delta^F_e\varphi_T^H:=\delta^F\varphi_T^H$ for $F\ne\varnothing$, and $\delta^\varnothing_e\varphi_T^H:=\varphi_T^H+e\cdot x$.
Using the binomial formula in form of $\sum_{S\subset E}(-1)^{|E\setminus S|}=0$ for $E\ne\varnothing$, we easily deduce
\begin{equation}\label{eq:defdeltaxi1}
\nabla \delta_e^G\varphi_T^{F\cup H} \,=\,\sum_{S\subset F}\nabla \delta_e^{S\cup G} \varphi_T^H.
\end{equation}

\medskip
\noindent
{\bf Inclusion-exclusion formula.}

When the inclusions $\{J_n\}_n$ are disjoint, we have
\begin{align}\label{eq:exclDISJ}
C^{(p)}=\sum_{n\in E^{(p)}} C^{\{n\}}.
\end{align}
However, since inclusions may overlap, intersections are accounted for several times in the right-hand side and
this formula no longer holds. We now recall a suitable system of notation to deal with those intersections.

For any (possibly infinite) subset $E\subset\N$, we set $A_E:=A_1+C_E$, where
$C_E:=(A_2-A_1)\mathds1_{J_E}$ and $J_E:=\bigcap_{n\in E}J_n$. Note that $J_{\{n\}}=J^{\{n\}}=J_n$ and $C^{\{n\}}=C_{\{n\}}$. 
For non-necessarily disjoint inclusions, $C^{(p)}$ is then given by the following general inclusion-exclusion formula:
\begin{align}\label{eq:excl}
C^{(p)}&=\sum_{n\in E^{(p)}} C_{\{n\}}-\sum_{n_1<n_2\in E^{(p)}}C_{\{n_1,n_2\}}+\sum_{n_1<n_2<n_3\in E^{(p)}}C_{\{n_1,n_2,n_3\}}-\ldots\nonumber\\
&=\sum_{k=1}^\infty(-1)^{k+1}\sum_{F\subset E^{(p)}\atop|F|=k}C_{F}.
\end{align}
Since the inclusions $J_n$'s have a bounded diameter and the point set is almost surely locally finite, the sum~\eqref{eq:excl} is locally finite almost surely. 

We shall need further notation in the proofs. For all $E,F\subset\N$, $E\ne\varnothing$, we set $J_{E\| F}:=(\bigcap_{n\in E}J_n)\setminus (\bigcup_{n\in F}J_n)$ and $J^E_{\| F}:=(\bigcup_{n\in E}J_n)\setminus (\bigcup_{n\in F}J_n)$, and then
\[C_{E\|F}:=(A_2-A_1)\mathds1_{J_{E\| F}}\qquad\text{and}\qquad C^E_{\|F}:=(A_2-A_1)\mathds1_{J^E_{\| F}}.\]
In particular, we have $C_{E\|\varnothing}=C_E$, $C^E_{\|\varnothing}=C^E$, and $C^\varnothing_{\|F}=0$. For simplicity of notation, we also set $C_{\varnothing\|F}=0=C_\varnothing$.
The inclusion-exclusion formula then yields for all $G,H\subset \N$ with $G\ne\varnothing$,
\begingroup\allowdisplaybreaks
\begin{eqnarray}
C^{H}&=&\sum_{S\subset H}(-1)^{|S|+1}C_S,\label{eq:excl3.1}
\\
C^{H}_{\|G}&=&\sum_{S\subset H}(-1)^{|S|+1}C_{S\|G},\label{eq:excl3.2}
\\
C_{G\|H}&=&\sum_{S\subset H}(-1)^{|S|}C_{S\cup G}\label{eq:excl3.3}.
\end{eqnarray}\endgroup

In \cite[Corollary~2.2]{DG-16a}, we established the following formulas for the coefficients $\{\bar A_{T,h}^{j}\}_j$ in~\eqref{eq:expansion-ATh}, which can be viewed as natural cluster formulas.
\begin{lem}\label{lem:cluster-form}
For all $T,h>0$, we have for all $j\ge  0$,
\begin{equation}\label{eq:formderdisj0}
e\cdot \bar A^{ j}_{T,h}e\,=\,j! \sum_{|F|=j}\sum_{G\subset F}(-1)^{|F\setminus G|+1}\E_h\left[\nabla\delta_e^G\varphi_{T,h} \cdot C_{F\setminus G\| G}(\nabla\varphi_{T,h}^{  F}+e)\right].
\qedhere
\end{equation}
\end{lem}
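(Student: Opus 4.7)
The plan is to identify $\bar A^j_{T,h}$ as $j!$ times the coefficient of $p^j$ in the Taylor series of $p\mapsto\bar A^{(p)}_{T,h}e$ at $p=0$; the analyticity of this series on $[0,1]$ is granted by \cite[Theorem~2.1]{DG-16a}, so the computation reduces to expanding the integrand of $\E_h[e\cdot A^{(p)}(\nabla\varphi_{T,h}^{(p)}+e)]$ as a power series in~$p$ and reading off the $j$-th coefficient. The extraction rests on the fact that the thinning Bernoullis $\{b_n^{(p)}\}_n$ are independent of the underlying process~$\Pc_h$ and satisfy $\E[\mathds{1}_{S\subset E^{(p)}}]=p^{|S|}$ for every finite $S\subset\N$.

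I would use two basic expansions. First, the inclusion-exclusion formula~\eqref{eq:excl} rewrites
\[
C^{(p)}=\sum_{F\ne\varnothing}(-1)^{|F|+1}\mathds{1}_{F\subset E^{(p)}}\,C_F.
\]
Second, Möbius inversion of~\eqref{eq:def-diff} yields $\varphi_{T,h}^H=\sum_{G\subset H}\delta^G\varphi_{T,h}$ for any finite $H\subset\N$, which applied formally with $H=E^{(p)}$ gives
\[
\nabla\varphi_{T,h}^{(p)}+e=\sum_{G}\mathds{1}_{G\subset E^{(p)}}\nabla\delta^G_e\varphi_{T,h},
\]
with the $G=\varnothing$ term recovering $\nabla\varphi_{T,h}+e$. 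Multiplying these two expansions, taking $\E_h$, and using independence of the Bernoullis would then produce a contribution weighted by $p^{|F\cup G|}$ for each pair $(F,G)$.

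I would next collect the terms of order~$p^j$ by fixing a set $H$ with $|H|=j$ and summing over the decompositions $F\cup G=H$ (equivalently, $G\subset H$ and $H\setminus G\subset F\subset H$). The three inclusion-exclusion identities~\eqref{eq:excl3.1}--\eqref{eq:excl3.3}, together with~\eqref{eq:defdeltaxi1}, are tailored precisely for such alternating subset sums: summing over $F$ at fixed $G$ contracts the individual $C_F$'s into the single block $C_{F\setminus G\|G}$ via~\eqref{eq:excl3.3}, while the companion sum rebuilds the full-set corrector $\nabla\varphi_{T,h}^F+e$ from the $\nabla\delta^G_e\varphi_{T,h}$'s via~\eqref{eq:defdeltaxi1}. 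After relabeling $H\to F$ and tracking signs, one arrives at~\eqref{eq:formderdisj0}.

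The main obstacle is the two-sided combinatorial repackaging in the last step, which is the source of the quadratic shape of~\eqref{eq:formderdisj0} (a single $\nabla\delta^G_e\varphi_{T,h}$ on one side and a full $\nabla\varphi_{T,h}^F+e$ on the other) and is where the algebra of \cite[Section~2]{DG-16a} really enters. A secondary technical point is the justification of Möbius inversion for the almost surely infinite index set $E^{(p)}$; this is handled by the uniform local finiteness and finite range of dependence of $\Pc_h$ (for any fixed $h>0$), together with the exponential localization of $\nabla\varphi_{T,h}(x)$ in $|x-y|/\sqrt T$ with respect to perturbations of $A$ on $Q_h(y)$, which together provide the absolute convergence needed to swap the sums with $\E_h$.
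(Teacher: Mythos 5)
The paper does not re-prove Lemma~\ref{lem:cluster-form}: it is quoted verbatim from \cite[Corollary~2.2]{DG-16a}, where the formula is derived by repeatedly testing the difference corrector equations against one another and exploiting the symmetry of $A$ to rewrite the $p^j$-coefficient in the displayed ``quadratic'' shape. Measured against that derivation, your proposal has a genuine gap at exactly the step you flag as the ``two-sided combinatorial repackaging.''

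Concretely, multiplying the inclusion-exclusion expansion of $C^{(p)}$ with the M\"obius expansion of $\nabla\varphi_{T,h}^{(p)}+e$ inside $\E_h[e\cdot A^{(p)}(\nabla\varphi_{T,h}^{(p)}+e)]$ yields, as the $p^j$-coefficient,
\begin{equation*}
\sum_{|G|=j}\E_h\big[e\cdot A_1\nabla\delta_e^G\varphi_{T,h}\big]
+\sum_{F\ne\varnothing,\,G\,:\,|F\cup G|=j}(-1)^{|F|+1}\E_h\big[e\cdot C_F\,\nabla\delta_e^G\varphi_{T,h}\big],
\end{equation*}
in which the fixed vector $e$ always sits on the left of the scalar product. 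In contrast, in \eqref{eq:formderdisj0} the left factor is $\nabla\delta_e^G\varphi_{T,h}$, with $G$ a free summation variable, while the right factor is the fully perturbed $\nabla\varphi_{T,h}^F+e$. Passing from the first expression to the second is \emph{not} accomplished by the purely algebraic identities \eqref{eq:excl3.1}--\eqref{eq:excl3.3} and \eqref{eq:defdeltaxi1} alone; it requires testing the massive corrector equations \eqref{eq:modif-corr} (and their finite differences) against one another, integrating by parts, and invoking the symmetry of $A_1,A_2$ so that the $\frac1T$ terms cancel. Already at $j=1$ one must verify
\begin{equation*}
\E_h\big[e\cdot A_1\nabla\delta^{\{n\}}\varphi_{T,h}\big]=\E_h\big[\nabla\varphi_{T,h}\cdot C_{\{n\}}(\nabla\varphi_{T,h}^{\{n\}}+e)\big],
\end{equation*}
which is a PDE energy identity, not a subset-sum identity; for general $j$ the analogous cancellations are needed at every level of the expansion. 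Your write-up attributes this entire step to ``the algebra of \cite[Section~2]{DG-16a},'' but that algebra only reorganizes sums over sets and cannot produce the change of test function. Either invoke \cite[Corollary~2.2]{DG-16a} as the paper does, or supply the integration-by-parts argument explicitly; as written, the key transformation is asserted rather than proved.

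Two smaller points: you never treat the $A_1(\nabla\varphi_{T,h}^{(p)}+e)$ part of the integrand, which contributes the first sum above and must be absorbed by the same energy identities; and the justification of M\"obius inversion with $H=E^{(p)}$ needs the approximate locality of $\nabla\varphi_{T,h}$ \emph{and} the uniform local finiteness of $\Pc_h$ to get absolute convergence, which you do mention but should tie explicitly to the exchange of $\E_h$ with the double sum over $(F,G)$.
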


\subsection{Optimal $\ell^1-\ell^2$ estimates}
In~\cite{DG-16a}, we used the naming ``$\ell^1-\ell^2$ estimates'' for the following family of estimates, which state that sums can be pulled out of the square without changing the bounds. In the present Poisson setting, this statement is to be compared to~\cite[Proposition~5.3]{giunti2021smoothness}.

\begin{prop1}\label{prop:main}
There exists a constant $C<\infty$ such that for all $T,h>0$ and $j,k\ge0$,  
\begin{equation}\label{eq:boundS}
S_j^k:=\E_h\bigg[ \sum_{|G|=k}\Big|\sum_{|F|=j\atop F\cap G=\varnothing}\nabla\delta^{F\cup G}\varphi_{T,h}\Big|^2\bigg]\le j! C^{k+j}.
\qedhere
\end{equation}
\end{prop1}

As in \cite{giunti2021smoothness}, the proof combines the original arguments for \cite[Proposition~4.6]{DG-16a} together with the following interesting property of the Poisson point process.
\begin{lem}\label{lem:JC}
Let $R$ be a bounded random function of indices with $R(\varnothing)=0$, and assume that it is approximately local in the sense that there exists $\kappa>0$ such that for all $F$,
\begin{equation}\label{e.app-loc}
|R(F)|\lesssim \sum_{n\in F}e^{-\kappa|x_n|}.
\end{equation}
 Then there exists $C<\infty$ (depending only on $d$ and on our fixed intensity $\lambda$) such that for all $h>0$ and $a,b,c\ge 1$ we have 
\begin{equation}
\E_h\bigg[\sum_{|H|=a,|G|=b\atop H\cap G=\varnothing}\mathds1_{J_H}\Big|\sum_{|F|=c\atop F\cap (H\cup G)=\varnothing}R(F\cup G)\Big|^2\bigg]\, \le \, \frac{C^a}{a!}\E_h\bigg[\sum_{|G|=b}\Big|\sum_{|F|=c\atop F\cap G=\varnothing}R(F\cup G)\Big|^2\bigg].
\label{eq:lala}
\qedhere
\end{equation}
\end{lem}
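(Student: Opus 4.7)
The plan is to apply a Slivnyak--Mecke-type identity that exploits the Bernoulli independence structure of $\Pc_h$, in the spirit of the Poisson calculus used in~\cite{giunti2021smoothness}. Intuitively, the geometric indicator $\mathds1_{J_H}$ in~\eqref{eq:lala} forces each index of $H$ to have its point in $B(x)$; the expected number of such configurations of size $a$ is then controlled by $(\lambda v_d)^a/a!$ (the factorial moment formula for the Poisson process, with $v_d$ the volume of the unit ball), which is precisely the prefactor on the right-hand side.

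More precisely, I would first expand the squared modulus and interchange summation to rewrite
\begin{equation*}
\text{LHS}\,=\,\E_h\bigg[\sum_{G,F_1,F_2}R(F_1\cup G)\,\overline{R(F_2\cup G)}\,\sum_{\substack{|H|=a\\H\cap K=\varnothing}}\mathds1_{J_H}(x)\bigg],
\end{equation*}
where $|G|=b$, $|F_i|=c$, $F_i\cap G=\varnothing$, and $K:=G\cup F_1\cup F_2$. Using the independence of the pairs $\{b_z,x_z\}_z$ across cubes $z\in(h\Z)^d$, the Bernoulli Mecke identity gives
\begin{equation*}
\E_h\bigg[\sum_{|H|=a,\,H\subset\Pc_h}f(H,\Pc_h)\bigg]\,=\,\frac{(\lambda h^d)^a}{a!}\sum_{z_1,\ldots,z_a\,\text{distinct}}\E_h\!\left[f(\{z_i\},\Pc_h^{b_{z_i}=1})\right],
\end{equation*}
where $\Pc_h^{b_{z_i}=1}$ denotes the process conditioned on $b_{z_i}=1$ for every $i$. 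The critical structural input, true for the corrector-difference application $R(E)=\nabla\delta^E\varphi_{T,h}$ of Proposition~\ref{prop:main}, is that $R(E)$ depends only on the positions of points indexed by $E$; hence the inner quantity $Q_{\{z_i\}}:=\sum_G|\sum_F R(F\cup G)|^2$ (with the constraints read off from the LHS) is independent of the positions $\{x_{z_i}\}$, so that
\begin{equation*}
\E_h\!\left[\mathds1_{J_{\{z_i\}}}(x)\,Q_{\{z_i\}}\right]\,=\,\prod_i\frac{|B(x)\cap Q_h(z_i)|}{h^d}\cdot\E_h\!\left[Q_{\{z_i\}}\right].
\end{equation*}
Summing the first factor over distinct $z_i$'s gives the bound $(v_d/h^d)^a$, which combined with $(\lambda h^d)^a/a!$ produces the required prefactor $(\lambda v_d)^a/a!$.

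The main obstacle I anticipate is the comparison of $\E_h[Q_{\{z_i\}}]$, an expectation over the configuration on $(h\Z)^d\setminus\{z_i\}$, with the RHS expectation in~\eqref{eq:lala} taken over the full $\Pc_h$. In the Poisson limit $h\downarrow0$, Slivnyak's theorem yields an exact equality, reflecting that two independent uniformly distributed points almost surely do not coincide. At fixed $h>0$, the discrepancy is generated by the contribution of $G$ and $F$ terms that contain some $z_i$; writing the full expectation as a Bernoulli average $\E_h[Q]=(1-p)\E_h[Q|_{b_{z_i}=0}]+p\E_h[Q|_{b_{z_i}=1}]$ with $p=\lambda h^d$ and using the non-negativity of $|\sum_F R(F\cup G)|^2$, one obtains a correction factor at most $(1-p)^{-a}$ per cube removed, which is uniformly controlled in the regime $p\le p_0<1$ and can be absorbed into the constant $C$ depending only on $d$ and $\lambda$; the approximate locality~\eqref{e.app-loc} ensures the remaining combinatorial and tail contributions stay summable.
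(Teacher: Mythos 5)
The proposal is correct in spirit and takes a genuinely different route from the paper. You use a Mecke-type (factorial moment) identity for the Bernoulli process to pull out the sum over $H$ directly, whereas the paper conditions on the number of points of $\Pc_h$ in a large box $Q_\rho$, replaces $R$ by a finite-volume truncation $R_\rho$ so this makes sense, and then exploits the algebraic identity $\pr{\sharp(\Pc_h\cap Q_\rho)=n}\binom{n}{a}\rho^{-da}\lesssim\frac1{a!}\pr{\sharp(\Pc_h\cap Q_\rho)=n-a}$ before resumming. Your approach avoids the finite-volume cutoff $R_\rho$ entirely, and you correctly identify and resolve the residual issue: after pulling out $H=\{z_1,\ldots,z_a\}$, the remaining expectation runs over the process on $(h\Z)^d\setminus H$, which you compare to the unconditioned RHS via $\E_h[Q]=(1-p)\E_h[Q|_{b_z=0}]+p\E_h[Q|_{b_z=1}]\ge(1-p)\E_h[Q|_{b_z=0}]$, iterated. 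The resulting $(1-\lambda h^d)^{-a}$ correction is benign in the relevant regime $h\downarrow0$; note the paper's own binomial-versus-Poisson probability comparison hides an analogous factor, so this is not a weakness relative to the paper. Both your argument and the paper's implicitly use that $R(E)$ depends only on the positions of points indexed by $E$ — a stronger structural hypothesis than the approximate locality \eqref{e.app-loc} actually stated in the lemma, and it is what licenses the decoupling of $\mathds1_{J_H}$ from the remaining sum; this is indeed satisfied by $R=\nabla\delta^\cdot\varphi_{T,h}$ in Proposition~\ref{prop:main}, and you are at least explicit about using it where the paper glosses over it. The only imprecision in your write-up is the phrase ``$(1-p)^{-a}$ per cube removed'': make sure $p$ there is $\lambda h^d$ and that the iteration runs exactly $a$ times, one for each $z_i$; aside from that the mechanism is sound.
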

\begin{proof}[Proof]
Because of approximate locality \eqref{e.app-loc}, the left-hand side of \eqref{eq:lala} is finite for all finite $a,b,c$, and we have 
\begin{multline*}
\lim_{\rho \uparrow \infty} \E_h\bigg[\sum_{|H|=a,|G|=b\atop H\cap G=\varnothing}\mathds1_{J_H}\Big|\sum_{|F|=c\atop F\cap (H\cup G)=\varnothing}R_\rho(F\cup G)\Big|^2\bigg]\\
=\,\E_h\bigg[\sum_{|H|=a,|G|=b\atop H\cap G=\varnothing}\mathds1_{J_H}\Big|\sum_{|F|=c\atop F\cap (H\cup G)=\varnothing}R(F\cup G)\Big|^2\bigg],
\end{multline*}
where $\{R_\rho\}_{\rho}$ stands for the finite-volume restrictions $R_\rho(F):=R(F \cap \{n:x_n\in Q_\rho\})$.
Hence it suffices to prove the claim for $R_\rho$ instead of $R$.
As $R_\rho(F)$ only depends on indices for points in $Q_\rho$, we may condition the expectation 
with respect to the number of points in $Q_\rho$, to the effect of
\begin{eqnarray*}
\lefteqn{\E_h\bigg[\sum_{|H|=a,|G|=b\atop H\cap G=\varnothing}\mathds1_{J_H}\Big|\sum_{|F|=c\atop F\cap (H\cup G)=\varnothing}R_\rho(F\cup G)\Big|^2\bigg]}
\\
&=&\sum_{n=a+b+c}^\infty\pr{\sharp \Pc_h\cap Q_\rho=n} \E_{h,\rho,n}\bigg[\sum_{|H|=a,|G|=b\atop H\cap G=\varnothing}\mathds1_{J_H}\Big|\sum_{|F|=c\atop F\cap (H\cup G)=\varnothing}R_\rho(F\cup G)\Big|^2\, \bigg],
\end{eqnarray*}
where $\E_{h,\rho,n}[\cdot]:=\E_h[\cdot\,|\, \sharp (\Pc_h \cap Q_\rho)=n]$.
The complete independence of $\Pc_h$ now ensures that $\E_{h,\rho,n}$ coincides with normalized integration on $Q_\rho$ with respect to all $n$ points.
This yields in particular
\begin{eqnarray*}
\lefteqn{\E_{h,\rho,n}\bigg[\sum_{|H|=a,|G|=b\atop H\cap G=\varnothing}\mathds1_{J_H}\Big|\sum_{|F|=c\atop F\cap (H\cup G)=\varnothing}R_\rho(F\cup G)\Big|^2\bigg]}\nonumber\\
&=&\binom{n}{a}\Big(\fint_{Q_\rho^a}\mathds1_{x_1,\ldots,x_a\in B}dx_1\ldots dx_a\Big) \E_{h,\rho,n-a}\bigg[\sum_{|G|=b}\Big|\sum_{|F|=c\atop F\cap G=\varnothing}R_\rho(F)\Big|^2\bigg]\nonumber\\
&\le&C^a\binom{n}{a}\rho^{-da} \E_{h,\rho,n-a}\bigg[\sum_{|G|=b}\Big|\sum_{|F|=c\atop F\cap G=\varnothing}R_\rho(F)\Big|^2\bigg].
\end{eqnarray*}
Noting that
\[\pr{\sharp(\Pc_h\cap Q_\rho)=n}\binom{n}{a}\rho^{-da}\lesssim\frac1{a!}\pr{\sharp(\Pc_h\cap Q_\rho)=n-a},\]
the claim now follows by summation in form of
\begingroup\allowdisplaybreaks
\begin{align*}
&\E_{h}\bigg[\sum_{|H|=a,|G|=b\atop H\cap G=\varnothing}\mathds1_{J_H}\Big|\sum_{|F|=c\atop F\cap (H\cup G)=\varnothing}R_\rho(F\cup G)\Big|^2\bigg]\nonumber\\
&~~=~~\sum_{n=a+b+c}^\infty\pr{\sharp(\Pc_h\cap Q_\rho)=n}\E_{h,\rho,n}\bigg[\sum_{|H|=a,|G|=b\atop H\cap G=\varnothing}\mathds1_{J_H}\Big|\sum_{|F|=c\atop F\cap (H\cup G)=\varnothing}R_\rho(F\cup G)\Big|^2\bigg]\nonumber\\
&~~\le~~C^a\sum_{n=a+b+c}^\infty\pr{\sharp(\Pc_h\cap Q_\rho)=n}\binom{n}{a}\rho^{-da}\E_{h,\rho,n-a}\bigg[\sum_{|G|=b}\Big|\sum_{|F|=c\atop F\cap G=\varnothing}R_\rho(F\cup G)\Big|^2\bigg]\nonumber\\
&~~\lesssim~~\frac{C^a}{a!}\E_h\bigg[\sum_{|G|=b}\Big|\sum_{|F|=c\atop F\cap G=\varnothing}R_\rho(F\cup G)\Big|^2\bigg].\qedhere
\end{align*}
\endgroup
\end{proof}
With the above lemma at hand, we are in position to prove Proposition~\ref{prop:main}.
\begin{proof}[Proof of Proposition~\ref{prop:main}]
The proof closely follows that of~\cite[Proposition~4.6]{DG-16a}.
In particular, it is based on a double induction argument in $j$ and $k$.
The only difference with the original proof of  \cite[Proposition~4.6]{DG-16a} is that we appeal to Lemma~\ref{lem:JC} each time we need to control a term of the form $\sum_{U\subset G}\mathds1_{J_{U}}$ (which is uniformly
bounded if the point process is uniformly locally finite).

\medskip

\step1 General recurrence relation.\\
Let $G\subset\N$ be a finite subset. Summing the equation satisfied by $\delta_e^{F\cup G}\varphi_{T,h}$ over $F$, cf.~\cite[Lemma~4.1]{DG-16a}, we find
\begin{eqnarray*}
\lefteqn{\frac1T\sum_{|F|=j+1\atop F\cap G=\varnothing}\delta_e^{F\cup G}\varphi_{T,h} -\nabla\cdot\Aa\nabla\sum_{|F|=j+1\atop F\cap G=\varnothing}\delta_e^{F\cup G}\varphi_{T,h}}\\
&=&\nabla\cdot\sum_{|F|=j+1\atop F\cap G=\varnothing}\sum_{S\subset F}\sum_{U\subset G}(-1)^{|S|+|U|+1}C_{S\cup U\| G\setminus U}\nabla\delta_e^{(F\setminus S)\cup(G\setminus U)}\varphi_{T,h}^S\\
&=&\nabla\cdot\sum_{U\subset G}\sum_{S\le j+1\atop S\cap G=\varnothing}(-1)^{|S|+|U|+1}C_{S\cup U\| G\setminus U}\sum_{|F|=j+1-|S|\atop F\cap(G\cup S)=\varnothing}\nabla\delta_e^{F\cup(G\setminus U)}\varphi_{T,h}^S.
\end{eqnarray*}
The energy estimate then yields after summing over $G$ (see e.g.~\cite[proof of Lemma~4.2]{DG-16a}),
\begin{multline*}
S_{j+1}^{k+1}\,:=\,\E_h\bigg[\sum_{|G|=k+1}\Big|\nabla\sum_{|F|=j+1\atop F\cap G=\varnothing}\delta_e^{F\cup G}\varphi_{T,h}\Big|^{2}\bigg]\\
\,\lesssim\,\E_h\bigg[\sum_{|G|=k+1}\Big|\sum_{U\subset G}\sum_{|S|\le j+1\atop S\cap G=\varnothing}(-1)^{|S|+|U|+1}C_{S\cup U\| G\setminus U}\sum_{|F|=j+1-|S|\atop F\cap(G\cup S)=\varnothing}\nabla\delta_e^{F\cup(G\setminus U)}\varphi_{T,h}^S\Big|^{2}\bigg].
\end{multline*}
Since we have $|C_{S\cup U\|G\setminus U}|\lesssim\mathds1_{J_S}\mathds1_{J_{U\|G\setminus U}}$, and since the family $\{J_{U\|G\setminus U}\}_{U\subset G}$ is disjoint for fixed $G$, we deduce
\begin{equation}\label{eq:plop}
S_{j+1}^{k+1}
\,\lesssim\,\E_h\bigg[\sum_{|G|=k+1}\sum_{U\subset G}\mathds1_{J_{U}}\bigg(\sum_{|S|\le j+1\atop S\cap G=\varnothing}\mathds1_{J_S}\Big|\sum_{|F|=j+1-|S|\atop F\cap(G\cup S)=\varnothing}\nabla\delta_e^{F\cup(G\setminus U)}\varphi_{T,h}^S\Big|\bigg)^{2}\bigg].
\end{equation}
Using the decomposition $\nabla\delta_e^{F\cup(G\setminus U)}\varphi_{T,h}^S=\sum_{R\subset S}\nabla\delta_e^{F\cup (G\setminus U)\cup R}\varphi_{T,h}$, cf.~\eqref{eq:defdeltaxi1}, this leads to
\begin{equation*}
S_{j+1}^{k+1}
\,\lesssim\,\E_h\bigg[\sum_{|G|=k+1}\sum_{U\subset G}\mathds1_{J_{U}}\bigg(\sum_{|S|\le j+1\atop S\cap G=\varnothing}\sum_{R\subset S}\mathds1_{J_S}\Big|\sum_{|F|=j+1-|S|\atop F\cap(G\cup S)=\varnothing}\nabla\delta_e^{F\cup(G\setminus U)\cup R}\varphi_{T,h}\Big|\bigg)^{2}\bigg],
\end{equation*}
or alternatively, disjointifying the sets,
\begin{multline}\label{eq:plop0}
S_{j+1}^{k+1}
\,\lesssim\,
\sum_{\alpha=1}^{k+1}\E_h\bigg[\sum_{|G|=k+1-\alpha,|U|=\alpha\atop G\cap U=\varnothing}\mathds1_{J_{U}}\bigg({\sum_{\beta={1}}^{j+1}\sum_{\gamma=0}^\beta}\sum_{|S|=\beta-\gamma,|R|=\gamma\atop (S\cup R)\cap (G\cup U)=S\cap R=\varnothing}\mathds1_{J_{R\cup S}}\\
\times\Big|\sum_{|F|=j+1-\beta\atop F\cap(G\cup U\cup S\cup R)=\varnothing}\nabla\delta_e^{F\cup G\cup R}\varphi_{T,h}\Big|\bigg)^{2}\bigg].
\end{multline}
Using~\eqref{eq:lala} in~\eqref{eq:plop0} (which we can since the massive approximation makes the corrector gradient approximately local with $\kappa\simeq1/\sqrt{T}$), we get
\begin{multline}\label{eq:bas}
S_{j+1}^{k+1}
\,\lesssim\,
\sum_{\alpha={1}}^{k+1}
\frac{C^\alpha}{\alpha !}
 \E_h\bigg[\sum_{|G|=k+1-\alpha}\bigg({\sum_{\beta={1}}^{j+1}\sum_{\gamma=0}^\beta}\sum_{|S|=\beta-\gamma,|R|=\gamma\atop (S\cup R)\cap G=S\cap R=\varnothing}\mathds1_{J_{R\cup S}}\\
\times
\Big|\sum_{|F|=j+1-\beta\atop F\cap(G\cup S\cup R)=\varnothing}\nabla\delta_e^{F\cup G\cup R}\varphi_{T,h}\Big|\bigg)^{2}\bigg].
\end{multline}
Now expanding the square,
\begin{multline*}
\E_h\bigg[\sum_{|G|=k+1-\alpha}\bigg({\sum_{\beta={1}}^{j+1}\sum_{\gamma=0}^\beta}\sum_{|S|=\beta-\gamma,|R|=\gamma\atop (S\cup R)\cap G=S\cap R=\varnothing}\mathds1_{J_{S\cup R}}
\Big|\sum_{|F|=j+1-\beta\atop F\cap(G\cup S\cup R)=\varnothing}\nabla\delta_e^{F\cup G\cup R}\varphi_{T,h}\Big|\bigg)^2\bigg]\\
\,=\,{\sum_{\beta=1}^{j+1}\sum_{\gamma=0}^\beta\sum_{\beta'=1}^{j+1}\sum_{\gamma'=0}^{\beta'}}\E\bigg[\sum_{|G|=k+1-\alpha}\sum_{|S|=\beta-\gamma,|R|=\gamma\atop (S\cup R)\cap G=S\cap R=\varnothing}\sum_{|S'|=\beta'-\gamma',|R'|=\gamma'\atop (S'\cup R')\cap G=S'\cap R'=\varnothing}{\mathds1_{J_{S\cup R}}\mathds1_{J_{S'\cup R'}}}\\
\times\Big|\sum_{|F|=j+1-\beta\atop F\cap(G\cup S\cup R)=\varnothing}\nabla\delta_e^{F\cup G\cup R}\varphi_{T,h}\Big|\Big|\sum_{|F'|=j+1-\beta'\atop F\cap(G\cup S'\cup R')=\varnothing}\nabla\delta_e^{F\cup G\cup R'}\varphi_{T,h}\Big|\bigg],
\end{multline*}
and {making $F$ (resp.~$F'$) disjoint from $S',R'$ (resp.~$S,R$) in form of
\[\Big|\sum_{|F|=j+1-\beta\atop F\cap(G\cup S\cup R)=\varnothing}\nabla\delta_e^{F\cup G\cup R}\varphi_{T,h}\Big|\le
\sum_{S_0'\subset S',R_0'\subset R'}\Big|\sum_{|F|=j+1-\beta-|S_0'|-|R_0'|\atop F\cap(G\cup S\cup R\cup S'\cup R')=\varnothing}\nabla\delta_e^{F\cup G\cup R\cup S_0'\cup R_0'}\varphi_{T,h}\Big|,\]
we deduce, using the bounds $ab\le a^2+b^2$ and $\sum_{H'\subset H}1\le2^{|H|}$,}
\begingroup\allowdisplaybreaks
\begin{multline*}
\E_h\bigg[\sum_{|G|=k+1-\alpha}\bigg({\sum_{\beta={1}}^{j+1}\sum_{\gamma=0}^\beta}\sum_{|S|=\beta-\gamma,|R|=\gamma\atop (S\cup R)\cap G=S\cap R=\varnothing}\mathds1_{J_{S\cup R}}
\Big|\sum_{|F|=j+1-\beta\atop F\cap(G\cup S\cup R)=\varnothing}\nabla\delta_e^{F\cup G\cup R}\varphi_{T,h}\Big|\bigg)^2\bigg]\\
\,\lesssim\, {\sum_{\beta=1}^{j+1}\sum_{\gamma=0}^\beta\sum_{\beta'=1}^{j+1}\sum_{\gamma'=0}^{\beta'}}{2^{\beta'}\,} \E\bigg[\sum_{|G|=k+1-\alpha}\sum_{|S|=\beta-\gamma,|R|=\gamma\atop (S\cup R)\cap G=S\cap R=\varnothing}\sum_{|S'|=\beta'-\gamma',|R'|=\gamma'\atop (S'\cup R')\cap G=S'\cap R'=\varnothing}\mathds1_{J_{S\cup R}}\mathds1_{J_{S'\cup R'}}\\
\times\sum_{S_0'\subset S',R_0'\subset R'}\Big|\sum_{|F|=j+1-\beta-|S_0'|-|R_0'|\atop F\cap(G\cup S\cup R\cup S'\cup R')=\varnothing}\nabla\delta_e^{F\cup G\cup R\cup S_0'\cup R_0'}\varphi_{T,h}\Big|^2\bigg].
\end{multline*}
\endgroup
As all sums are on disjoint index sets, we are now in position to appeal again to~\eqref{eq:lala}, and we easily deduce after straightforward simplifications,
\begin{multline*}
\E_h\bigg[\sum_{|G|=k+1-\alpha}\bigg({\sum_{\beta={1}}^{j+1}\sum_{\gamma=0}^\beta}\sum_{|S|=\beta-\gamma,|R|=\gamma\atop (S\cup R)\cap G=S\cap R=\varnothing}\mathds1_{J_{S\cup R}}
\Big|\sum_{|F|=j+1-\beta\atop F\cap(G\cup S\cup R)=\varnothing}\nabla\delta_e^{F\cup G\cup R}\varphi_{T,h}\Big|\bigg)^2\bigg]\\
\,\lesssim\,{\sum_{\beta=1}^{j+1}\sum_{\gamma=0}^\beta\sum_{\beta'=1}^{j+1}\sum_{\gamma'=0}^{\beta'}\sum_{\delta=0}^{\beta'}{\frac{C^{\beta+\beta'-\gamma-\delta}}{(\beta+\beta'-\gamma-\delta)!}}}\,\E_h\bigg[\sum_{|G|=k+1-\alpha+\gamma+\delta}\Big|\sum_{|F|=j+1-\beta-\delta\atop F\cap G=\varnothing}\nabla\delta_e^{F\cup G}\varphi_{T,h}\Big|^2\bigg].
\end{multline*}
Inserting this into~\eqref{eq:bas}, and noting that $\frac{1}{m!n!}C^mC^n\le\frac1{(m+n)!}(2C)^{m+n}$, we are then led to
\begin{multline*}
S_{j+1}^{k+1}
\,\lesssim\,
{\sum_{\alpha=1}^{k+1}\sum_{\beta=1}^{j+1}\sum_{\gamma=0}^\beta\sum_{\beta'=1}^{j+1}\sum_{\gamma'=0}^{\beta'}\sum_{\delta=0}^{\beta'}{\frac{C^{\alpha+\beta+\beta'-\gamma-\delta}}{(\alpha+\beta+\beta'-\gamma-\delta)!}}}\\
\times\E_h\bigg[\sum_{|G|=k+1-\alpha+\gamma+\delta}\Big|\sum_{|F|=j+1-\beta-\delta\atop F\cap G=\varnothing}\nabla\delta_e^{F\cup G}\varphi_{T,h}\Big|^2\bigg],
\end{multline*}
or equivalently, after reorganizing the sums,
\begin{equation}\label{e.induction}
S_{j+1}^{k+1}
\,\lesssim\,
\sum_{\delta=0}^{j+1}\sum_{\beta=1}^{j+1}\sum_{\alpha=0}^{k+\beta}{\frac{C^{k+\beta-\alpha+1}}{(k+\beta-\alpha+1)!}}
 S^{\alpha+\delta}_{j+1-\beta-\delta}.
\end{equation}

\medskip

\step2 Conclusion.\\
We initialize the induction by noting that
$$
S_0^0 \le C,
$$
which is  nothing but the standard energy estimate for the corrector $\varphi_T$ (an a priori estimate that only requires the uniform ellipticity of $A$).  
Then, by a similar (double) induction argument as in~\cite{DG-16a}, now based on~\eqref{e.induction}, the claim follows (for some possibly different constant $C<\infty$).
\end{proof}

\subsection{Proof of Gevrey regularity}

The rest of the proof follows our general argument in~\cite{DG-16a}.
First, adapting the proof of \cite[Proposition~5.2]{DG-16a} by using Lemma~\ref{lem:JC} (as we did above for \cite[Proposition~4.6]{DG-16a}), and replacing  \cite[Proposition~4.6]{DG-16a} by Proposition~\ref{prop:main}, we directly obtain the uniform bounds \eqref{e.unif-bd}. In order to use this bound to prove regularity based on the qualitative convergence \eqref{e.qual-conv} and the regularity of $p \mapsto \bar A_{T,h}^{(p)}$, it remains to appeal 
to a Taylor formula in form of~\cite[(5.25)]{DG-16a}: for all $k$ and $p\in [0,1]$,
$$
|\bar A^{(p)}_{T,h}- \sum_{j=0}^k \frac{p^j}{j!} \bar A^{j}_{T,h}| \le \frac{p^{k+1}}{(k+1)!} \sup_{u \in [0,p]}  |\bar A^{k+1}_{T,h}(\Pc^{(u)})|,
$$
where $\bar A^{k+1}_{T,h}(\Pc^{(u)})$ denotes the $(k+1)$th term of the expansion associated with the (partially) decimated point process $\Pc^{(u)}$, which is itself in the present case a Poisson point process with intensity $\lambda u$, hence for which the  bound \eqref{e.unif-bd} holds uniformly on $u\in [0,p]$.
Since the constants are uniform wrt $T,h$, as in \cite{DG-16a}, this entails the existence of 
the limits $\bar A^j=\lim_{T\uparrow\infty,h\downarrow0}\bar A^j_{T,h}$, and there holds for all $j,k\ge 0$ and $p\in [0,1]$,
$$
\Big|\bar A^{(p)}- \sum_{j=0}^k \frac{p^j}{j!} \bar A^{j} \Big| \le (k+1)!\,(Cp)^{k+1}\qquad\text{and}
\qquad |\bar A^{j}| \le j!^2C^j.
$$
The conclusion of Theorem~\ref{th:main} then follows from the arguments at the beginning of Section~\ref{sec:strategy}.

\section*{Acknowledgements}
MD acknowledges financial support from the CNRS-Momentum program,
and AG from the European Research Council (ERC) under the European Union's Horizon 2020 research and innovation programme (Grant Agreement n$^\circ$~864066).

\bibliographystyle{plain}

\begin{thebibliography}{10}

\bibitem{Anantharaman-LeBris-11}
A.~Anantharaman and C.~Le~Bris.
\newblock A numerical approach related to defect-type theories for some weakly
  random problems in homogenization.
\newblock {\em Multiscale Model. Simul.}, 9(2):513--544, 2011.

\bibitem{Anantharaman-LeBris-12}
A.~Anantharaman and C.~Le~Bris.
\newblock Elements of mathematical foundations for numerical approaches for
  weakly random homogenization problems.
\newblock {\em Commun. Comput. Phys.}, 11(4):1103--1143, 2012.

\bibitem{D-thesis}
M.~Duerinckx.
\newblock Topics in the {M}athematics of {D}isordered {M}edia.
\newblock PhD thesis, Universit\'e Libre de Bruxelles \& Universit\'e Pierre et
  Marie Curie, 2017.

\bibitem{DG-16a}
M.~Duerinckx and A.~Gloria.
\newblock Analyticity of homogenized coefficients under {B}ernoulli
  perturbations and the {C}lausius-{M}ossotti formulas.
\newblock {\em Arch. Ration. Mech. Anal.}, 220(1):297--361, 2016.

\bibitem{giunti2021quantitative}
A.~Giunti, C.~Gu, and J.-C. Mourrat.
\newblock Quantitative homogenization of interacting particle systems.
\newblock Preprint, arXiv:2011.06366, 2020.

\bibitem{giunti2021smoothness}
A.~Giunti, C.~Gu, J.-C. Mourrat, and M.~Nitzschner.
\newblock Smoothness of the diffusion coefficients for particle systems in
  continuous space.
\newblock Preprint, arXiv:2112.06123, 2021.

\bibitem{Gloria-12}
A.~Gloria and Z.~Habibi.
\newblock Reduction in the resonance error in numerical homogenization {II}:
  {C}orrectors and extrapolation.
\newblock {\em Found. Comput. Math.}, 16(1):217--296, 2016.

\bibitem{Gloria-Otto-10b}
A.~Gloria and F.~Otto.
\newblock Quantitative results on the corrector equation in stochastic
  homogenization.
\newblock {\em Journ. Europ. Math. Soc. (JEMS)}, 19:3489--3548, 2017.

\bibitem{LP-18}
G.~Last and M.~Penrose.
\newblock {\em Lectures on the {P}oisson process}, volume~7 of {\em Institute
  of Mathematical Statistics Textbooks}.
\newblock Cambridge University Press, Cambridge, 2018.

\bibitem{Mourrat-13}
J.-C. Mourrat.
\newblock {First-order expansion of homogenized coefficients under Bernoulli
  perturbations}.
\newblock {\em J. Math. Pures Appl.}, 103:68--101, 2015.

\bibitem{Torquato-02}
S.~Torquato.
\newblock {\em Random heterogeneous materials}, volume~16 of {\em
  Interdisciplinary Applied Mathematics}.
\newblock Springer-Verlag, New York, 2002.
\newblock Microstructure and macroscopic properties.

\end{thebibliography}

\def\cprime{$'$} \def\cprime{$'$} \def\cprime{$'$}

\end{document}